\newcommand{\R}{{\mathbb R}}
\newcommand{\I}{{\mathcal{O}}}
\newtheorem{theorem}{Theorem}[section]
\newtheorem{corollary}[theorem]{Corollary}
\newtheorem{lemma}[theorem]{Lemma}
\newtheorem{proposition}[theorem]{Proposition}
\newtheorem{definition}[theorem]{Definition}
\newtheorem{remark}{Remark}
\begin{document}

\title[Loops, Holonomy and Signature]{Loops, Holonomy and Signature}

\author{Juan Alonso}
\address{Universidad de la Rep\'ublica, Centro de Matem\'atica,  Facultad de Ciencias, Igu\'a 4225, 11400 Montevideo
\\Uruguay.}
\email{juan@cmat.edu.uy}

\author{Juan Manuel Burgos}
\address{Instituto de Matemática y Estadística, FING, UDELAR, Av. Julio Herrera y Reissig 565, CP 11300, Montevideo, Uruguay\\ Unidad Académica Matemática, FCEA, UDELAR, Gonzalo Ramírez 1926, CP 11200, Montevideo, Uruguay.}
\email{jmburgos@fing.edu.uy}

\author{Miguel Paternain}
\address{Universidad de la Rep\'ublica, Centro de Matem\'atica,  Facultad de Ciencias, Igu\'a 4225, 11400 Montevideo
\\Uruguay.}
\email{miguel@cmat.edu.uy}

\begin{abstract}
We show that there is a topology on certain groups of loops in Euclidean space such that these groups are embedded in a Fréchet-Lie group which is the structural group of a principal bundle with connection whose holonomy coincides with the Chen signature map. We also give an alternative geometric new proof of the Chen signature theorem and a generalization of this theorem in classes strictly containing the one originally considered by Chen.
\end{abstract}

\subjclass[2020]{Primary: 53C29, 55P10, 51H25; Secondary: 81Q70.}

\keywords{Loop space, Holonomy, Signature}

\maketitle

\section{Introduction}

In this paper we consider the  relationship between holonomy maps of principal bundles with a connection and the signature map (see \eqref{Chen_map}) defined by Chen (\cite{Chen1},  \cite{Chen2prima}, \cite{Chen2}).

If  $\gamma$ is a piecewise regular curve in $\R^n$, Chen  defines the formal power series
\[
\Theta(\gamma)\, =\,1\,+\, \sum^{\infty}_{p=1}\,\  \sum_{i_1,\,i_2,\, \ldots,\,i_p}\  \,\int_{\gamma} dx_{i_1}dx_{i_2} \ldots dx_{i_p}\        X_{i_1}X_{i_2} \ldots X_{i_p}
\]
in the noncommutative indeterminates $X_1, \ldots, X_n$, where the coefficients are {\em iterated integrals} defined in section \ref{signature_section}. This will be called the \textit{Chen signature map}, and it is a homomorphism with respect to concatenation of curves. The Chen signature theorem (See Theorem \ref{Chen_Signature_Theorem} below) states that if 
$\Theta(\gamma)=1$ then $\gamma$   is a retraceable loop i.e.  a trivial loop under the equivalence relation generated by the identifications   $\alpha aa^{-1}\beta\sim \alpha\beta$.   
More recently (see  \cite{Lyons1, Lyons2, Lyons2prima, Lyons3}) the signature map was generalized for curves with less regularity, and it was shown that the kernel of the signature map  on paths with bounded variation consists of tree-like paths (see section  \ref{section_Juan}).

Holonomy maps appear in  topology and differential geometry and  they trace back to the well known theory of representations of the fundamental group and constructions of local systems over a manifold by the holonomy map of a principal bundle with a flat connection. A natural question is how to extend this construction to arbitrary connections. This question leads directly to the loop space of the manifold, an object that will be defined shortly \footnote{Do not confuse this space with the one defined in algebraic topology. The ambiguity in the names will disappear after we introduce the right notation.}. In contrast to the fundamental group of a manifold, the loop space we are referring to here will not be a purely topological object.
Specifically, while the fundamental group of a manifold coincides with that computed in the category of smooth curves, the loop space of the manifold, and therefore the results concerning this space, will depend on the category of curves where it is defined.

We show in Theorem \ref{main} that the Chen signature map can be regarded as the holonomy map of a suitable connection on a principal bundle. The description of this very natural and simple  geometric interpretation is one of the main aims of this note. The other one is to give a geometric proof  of the Chen signature theorem, in Theorem \ref{Chen_Signature_Theorem}, which is based on Theorem 1.7 of 
\cite{Holonomy}. This approach also allows us to directly  obtain a  major ingredient in the original proof of the Chen signature theorem, namely Lemma \ref{fundamental} whose content is  similar to the  Fundamental Lemma 3.5 in  \cite{Chen2}.  
Below we expand on these ideas in more detail, including the precise statements of the main results and some extensions, as well as a discussion of the different loop spaces we consider.  

Consider a point $p$ in a manifold $M$. We say that a class of loops based at $p$ is  \emph{concatenable} if the  concatenation of two loops in the class belongs to the class. Examples of classes with this property are the class of piecewise smooth loops $\Omega^{ps}(M,p)$,   the     
class of piecewise smoothly immersed loops $\Omega^{psi}(M,p)$ or the 
 class of piecewise analytic loops $\Omega^{pa}(M,p)$.      
By identifying loops of a concatenable class by reparameterization, we obtain associativity of the operation but in general $\alpha\,\alpha^{-1}\neq c$ where $c$ denotes the constant loop. A further equivalence relation is needed, and we consider the following three. Firstly, a natural relation is the one finitely generated by $\alpha aa^{-1}\beta\sim \alpha\beta$. This will be called the \emph{retrace relation}.  
The corresponding groups are denoted by  $\mathcal{L}^{\ ps,ret}(M,p)$, $\mathcal{L}^{\ psi ,ret}(M,p)$ or $\mathcal{L}^{\ pa,ret}(M,p)$ depending on the choice of the class of curves. 
Then there is the relation by {\em thin} or {\em zero area} homotopies, that we define in Section \ref{s.prelim}. We denote the resulting groups by $\mathcal{L}^{\ ps,thin}(M,p)$, $\mathcal{L}^{\ psi ,thin}(M,p)$ or $\mathcal{L}^{\ pa,thin}(M,p)$ respectively.

From the point of view of holonomy,  the natural relation is the one which identifies a pair of loops if, given a Lie group $G$, their holonomies coincide for every $G$-principal bundle with connection. This relation will be called the \emph{G-holonomy relation}. 
The   corresponding groups are  denoted by  $\mathcal{L}^{\ ps,G}(M,p)$, $\mathcal{L}^{\ psi ,G}(M,p)$ or $\mathcal{L}^{\ pa, G}(M,p)$ depending on the choice of the class of curves.


For this work it will be important to consider the difference between the groups obtained by these equivalence relations. We quote the known facts about this in Section \ref{s.prelim}, but we remark one result here: In \cite{Meneses}, Meneses  provides  a non-retraceable thin loop in the class of piecewise smoothly loops, showing in particular that the group $\mathcal{L}^{\ ps,ret}(M,p)$ is a proper extension of $\mathcal{L}^{\ ps,thin}(M,p)$. We show that no such counterexample is possible in the class of piecewise smoothly immersed loops.

 Actually, in section \ref{section_Juan} we prove the following proposition.

\begin{proposition}\label{Equivalencia_ret_thin}
In the class of piecewise smoothly immersed curves based at a point $p$ in $M$, the retrace and thin homotopy relations coincide. That is, there is a natural isomorphism
$$\mathcal{L}^{\ psi,ret}(M,p)\ \cong\ \mathcal{L}^{\ psi,thin}(M,p).$$
\end{proposition}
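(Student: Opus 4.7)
The plan is to prove the two inclusions of equivalence relations separately, since the proposition amounts to showing that on $\Omega^{psi}(M,p)$ the retrace and thin homotopy relations coincide. The forward direction, retrace $\Rightarrow$ thin, is elementary: given a single retrace move $\alpha\,a\,a^{-1}\,\beta\sim\alpha\,\beta$, the collapse of the back-and-forth segment $a\,a^{-1}$ can be realized by a homotopy whose image is contained in the trace of $a$. Since $a$ is piecewise smoothly immersed, this image is piecewise $1$-dimensional and the homotopy factors smoothly through it, so its differential has rank at most one. Induction on the finite number of retrace moves, together with concatenation invariance of thin homotopy, finishes this direction.

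For the reverse direction, thin $\Rightarrow$ retrace, I would combine two facts already available in the excerpt. First, thin homotopy preserves the holonomy of any connection in any principal $G$-bundle: if $H\colon[0,1]^2\to M$ has $\mathrm{rank}(dH)\le 1$ everywhere, then $H^{*}F=0$ for every curvature $2$-form $F$, and a non-abelian Stokes argument shows that the holonomies of the two boundary loops coincide. Hence thin homotopic loops in $\Omega^{psi}(M,p)$ represent the same class in $\mathcal{L}^{\ psi,G}(M,p)$ for every Lie group $G$ and every principal $G$-bundle with connection over $M$. Second, by the Spallanzani isomorphism \eqref{Spalla_immersed}, choosing any connected non-solvable Lie group $G$ (for instance $\mathrm{SL}(2,\R)$) the group $\mathcal{L}^{\ psi,G}(M,p)$ is canonically isomorphic to $\mathcal{L}^{\ psi,ret}(M,p)$. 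Composing, thin homotopic loops are retrace equivalent, which together with the forward direction yields the desired natural isomorphism induced by the identity on piecewise smoothly immersed loops.

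The main obstacle in this strategy is the combinatorial content hidden inside \eqref{Spalla_immersed}, which itself rests on the Baez--Sawin technology of tassels and webs to produce a factorization distinguishing non-retrace-equivalent classes of piecewise smoothly immersed loops. A more direct geometric proof, not passing through Lie groups, would need to analyze the image of a thin homotopy $H$, a measure-zero $1$-dimensional set that can have intricate local structure in the piecewise smooth (non-analytic) setting, and to organize the discrepancy between $\alpha$ and $\beta$ into finitely many retrace moves. The Meneses counterexample in the merely piecewise smooth (non-immersed) class shows that the immersion hypothesis must enter essentially: it does so through the fact that interior points of piecewise smoothly immersed arcs carry well-defined non-vanishing tangent directions, which rule out the pathologies that obstruct retracing in Meneses's example.
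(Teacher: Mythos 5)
Your proposal takes a genuinely different route from the paper's. The paper reduces the statement to loops that are trivial under each relation and argues via tree-like paths: by L\'evy \cite{Levy} (see also \cite{Tlas}), a bounded-variation loop is thin homotopic to a constant if and only if it is tree-like in the sense of Definition \ref{tree-like}, and the paper then proves a short combinatorial lemma showing that a tree-like piecewise smoothly immersed loop factors through a finite simplicial tree --- the immersion hypothesis forces each restriction $\phi|_{[t_i,t_{i+1}]}$ to be injective, hence a geodesic segment, so the image of $\phi$ is the convex hull of finitely many points in an $\R$-tree --- whence the loop is retraceable. You instead route the hard direction through holonomy: thin implies $G$-holonomy equivalent, and Spallanzani's isomorphism \eqref{Spalla_immersed} converts $G$-holonomy equivalence back into retrace equivalence. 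This is non-circular (the isomorphism is quoted from \cite{Holonomy} and does not depend on the Proposition), but it imports the full Baez--Sawin machinery of tassels and webs, as you yourself note, whereas the paper's argument needs only L\'evy's characterization plus an elementary observation about convex hulls in $\R$-trees. The paper's route is considerably lighter and more self-contained.

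The one step of yours that needs repair is the claim that thin homotopy preserves holonomy. You justify it with the rank-one Stokes argument ($\mathrm{rank}(dH)\le 1$ implies $H^*F=0$, so the pulled-back connection on the square is flat), but the paper defines a thin homotopy as a continuous homotopy whose \emph{image} lies in the union of the traces of the two loops, with no differentiability assumed, so the pullback argument does not literally apply; note also that this union can be a complicated $1$-dimensional set (two immersed arcs may intersect in a Cantor set), so one cannot simply treat it as a finite graph. The gap is patchable without new ideas: either invoke the isomorphism \eqref{Tlas} (Tlas's theorem together with Remark 4 of \cite{Meneses}), which already asserts that the thin and $G$-holonomy relations coincide on piecewise smooth loops for semisimple $G$, or work throughout with the rank-one formulation and quote the comparison between the two notions. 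As written, however, there is a definitional mismatch between the relation you prove something about and the relation appearing in the statement.
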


From now on, unless otherwise specified, we will denote the groups in the previous proposition simply by $\mathcal{L}(M, p)$ and by $\mathcal{L}_p$ in the case the manifold is the Euclidean space $\R^n$.

In our next result we construct a principal bundle whose holonomy map is the signature on loops. 
Let $\overline{G}_n$ be the Fréchet-Lie group corresponding to the completion of the real free Lie algebra generated by $X_1,\ldots, X_n$. (See section \ref{section_holonomy_signature} for more details and references). 
\begin{theorem}\label{main}
There is a $\overline{G}_n$-principal bundle with connection $(E,\,\R^n,\,\overline{G}_n,\,\pi,\,\theta)$ such that for any $p$ in $\R^n$, the holonomy map 
$$Hol_{\,\theta,\,p}:\,\mathcal{L}_p\,\hookrightarrow\,\overline{G}_n$$
is a monomorphism	 and it coincides with the Chen signature map  on $\mathcal{L}_p$. 
\end{theorem}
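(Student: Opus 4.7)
The plan is to realize the Chen signature map as the holonomy of an explicit and essentially canonical connection on a trivial principal bundle.

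First, I would take the trivial principal bundle $E = \R^n \times \overline{G}_n$ with projection onto $\R^n$. Writing $\overline{\mathfrak{g}}_n$ for the Fréchet-Lie algebra of $\overline{G}_n$, i.e.\ the completion of the free real Lie algebra on $X_1,\ldots,X_n$, I would prescribe the connection by the $\overline{\mathfrak{g}}_n$-valued $1$-form
$$\omega \;=\; \sum_{i=1}^n X_i\, dx_i$$
pulled back from $\R^n$. A routine verification, which I would carry out in Section \ref{section_holonomy_signature}, shows that $\omega$ genuinely defines a principal connection on the trivial bundle in the Fréchet-Lie category.

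Second, for a piecewise smoothly immersed loop $\gamma:[0,1]\to\R^n$ based at $p$, I would analyze the parallel-transport ODE for the horizontal lift starting at $(\gamma(0),1)$,
$$g'(t) \;=\; g(t)\cdot\omega(\dot\gamma(t)) \;=\; g(t)\cdot\sum_i X_i\,\dot x_i(t),\qquad g(0)=1,$$
viewed inside the completed tensor algebra that contains $\overline{G}_n$. The standard Picard/Dyson iteration then yields, grouped by degree in the $X_i$'s,
$$g(1) \;=\; 1\,+\,\sum_{p\geq 1}\ \sum_{i_1,\ldots,i_p}\Big(\int_{\gamma} dx_{i_1}\cdots dx_{i_p}\Big)\, X_{i_1}\cdots X_{i_p},$$
which is precisely the Chen series \eqref{Chen_map_2}. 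This identifies $Hol_{\nabla,p}(\gamma)$ with $\Theta(\gamma)$ term by term, hence as elements of $\overline{G}_n$.

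Third, holonomy is automatically invariant under reparameterization and under retracing, and concatenation of loops corresponds to composition of parallel transports. The assignment $\gamma\mapsto Hol_{\nabla,p}(\gamma)$ therefore descends to a group homomorphism $\mathcal{L}_p\to \overline{G}_n$, which by the previous step coincides with the Chen signature. Injectivity is then Chen's faithfulness theorem for the signature on $\mathcal{L}^{\,psi,ret}(\R^n,p)$, for which the paper independently supplies a geometric proof.

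The main obstacle I expect is justifying every step in the Fréchet-Lie category: verifying that $\omega$ does define a smooth principal connection, that the parallel-transport ODE admits a unique Fréchet-smooth solution for piecewise smoothly immersed paths, and that Picard iteration matches the Chen series term by term in the inverse-limit topology on $\overline{G}_n$. All of this should reduce to truncating the tensor algebra by terms of degree greater than $N$: on each finite-dimensional truncation the parallel-transport equation is an ordinary linear ODE with piecewise smooth coefficients, and the statement then follows by passing to the inverse limit over $N$.
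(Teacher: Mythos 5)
Your proposal follows essentially the same route as the paper: the trivial bundle $\R^n\times\overline{G}_n$ with connection form $\pm\sum_i X_i\,dx_i$, identification of the horizontal-lift ODE's Picard/Dyson series with the Chen series, and injectivity deduced from the Chen signature theorem (for which the paper gives its own geometric proof in Section \ref{geometric_proof}). The only difference is bookkeeping: the paper outsources the facts that the signature lands in $\overline{G}_n$ and that the linear ODE solution equals the signature to \cite{Lyons1} and \cite{Loll}/\cite{Neretin} respectively, whereas you sketch the degree-truncation and inverse-limit argument directly; both are adequate.
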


For this work we consider principal bundles with left actions of the structure group. This  is not the most common convention but it is what makes the holonomy a group homomorphism. When the action is on the right, the resulting holonomy reverses the order of the product.   Chen \cite{Chen2prima} also defines an alternative version of the signature $\Theta^{*}$, satisfying    
$\Theta^{*}(\alpha\beta)=\Theta^{*}(\beta)\Theta^{*}(\alpha)$. Taking the action on the right would lead to $\Theta^{*}$ as holonomy (see Remark \ref{leftaction}).  
One feature of our approach is that it provides simple geometric  characterizations of both signatures.

In \cite{Neretin}, the author introduces for $\xi>0$ a family of Banach-Lie groups $\mbox{Fr}_n^{\,\xi}\subset \overline{G}_n$ allowing us to  strengthen  the above result:

\begin{theorem}\label{main2}
For every loop $\gamma$ in $\mathcal{L}_p$, the element $Hol_{\,\theta,\,p}(\gamma)\,=\,\Theta(\gamma)$ belongs to the Banach-Lie group $\mbox{Fr}_n^{\,\xi}$ for every $\xi>0$.
\end{theorem}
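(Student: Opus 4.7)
The plan is to combine the classical factorial decay of iterated integrals with Neretin's explicit description of the Banach-Lie groups $\mbox{Fr}_n^{\,\xi}$. Recall that in Neretin's construction these groups sit inside $\overline{G}_n$ as precisely those series whose homogeneous components $a_p$ (of degree $p$ in the free algebra generated by $X_1,\ldots,X_n$) satisfy a weighted summability condition of the form $\sum_p \xi^p\|a_p\|_p<\infty$ for a suitable norm $\|\cdot\|_p$ on the degree-$p$ part of the tensor algebra. So the statement reduces to bounding the degree-$p$ component of $\Theta(\gamma)$ and checking that the resulting weighted series converges for every $\xi>0$.

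First, fix any piecewise smoothly immersed representative of $\gamma\in\mathcal{L}_p$. Since the iterated integrals in \eqref{Chen_map_2} are invariant under reparameterization and under retrace cancellations $aa^{-1}\sim c$, they depend only on the class $\gamma$. Let
$$L\;=\;\int_{0}^{1}|\dot\gamma(t)|_1\,dt$$
denote its $\ell^1$-length (compatible with the basis $X_1,\ldots,X_n$); this is finite on any piecewise smooth representative.

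The key technical step is the factorial estimate
$$\sum_{i_1,\ldots,i_p}\left|\int_\gamma dx_{i_1}\cdots dx_{i_p}\right|\;\leq\;\frac{L^{p}}{p!}.$$
This is obtained by a single Fubini argument: write each iterated integral as an integral over the simplex $\Delta_p=\{0<t_1<\cdots<t_p<1\}$, apply the triangle inequality after summing over indices so that the integrand becomes $\prod_{k=1}^{p}|\dot\gamma(t_k)|_1$, and use $\mathrm{vol}(\Delta_p)=1/p!$ together with Fubini to evaluate the remaining integral as $L^p/p!$. Identifying the $\ell^1$ norm on monomials $X_{i_1}\cdots X_{i_p}$ with the norm $\|\cdot\|_p$ chosen in section \ref{section_holonomy_signature} following Neretin, this yields $\|\Theta(\gamma)_p\|_p\leq L^p/p!$.

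Putting these together, for every $\xi>0$ one has
$$\sum_{p\geq 0}\xi^{p}\,\|\Theta(\gamma)_p\|_p\;\leq\;\sum_{p\geq 0}\frac{(\xi L)^{p}}{p!}\;=\;e^{\xi L}\;<\;\infty,$$
which places $\Theta(\gamma)=Hol_{\,\nabla,\,p}(\gamma)$ in $\mbox{Fr}_n^{\,\xi}$. The main obstacle I expect is a bookkeeping one rather than a conceptual one: aligning the normalization conventions between Chen's iterated integrals and Neretin's weights. Depending on the precise cross-norm used on the tensor powers (pure $\xi^p$, or $\xi^p/p!$, or a projective tensor norm), a mild refinement of the simplex bound may be needed, but in each variant the factorial gain $1/p!$ dominates any polynomial-in-$p$ correction and the exponential convergence persists for arbitrary $\xi>0$.
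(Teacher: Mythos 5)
Your proof is correct, but it takes a genuinely different route from the paper. The paper's proof is essentially a citation: it observes that the connection coefficient $-\omega_{\nabla}(\gamma'(t))$ lies in ${\mathcal T}_n^{\xi}$ for each $t$ and then invokes the first part of Lemma 2.5 in \cite{Neretin}, which asserts that the solution of the linear horizontality equation \eqref{linear} with such coefficients stays in $\mbox{Fr}_n^{\,\xi}$. You instead prove the needed summability directly via the classical factorial decay of iterated integrals: the simplex/Fubini argument giving $\sum_{i_1,\ldots,i_p}\bigl|\int_\gamma dx_{i_1}\cdots dx_{i_p}\bigr|\leq L^p/p!$ is correct, and since Neretin's norm is exactly $\Vert x\Vert_{\xi}=\sum \xi^{p}\vert c_{i_1\ldots i_p}\vert$ (pure $\xi^p$ weights on $\ell^1$ coefficients), your worry about normalization conventions evaporates and you get the explicit bound $\Vert\Theta(\gamma)\Vert_{\xi}\leq e^{\xi L}$. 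Two small remarks: first, your estimate only places $\Theta(\gamma)$ in the Banach space ${\mathcal T}_n^{\xi}$, and since $\mbox{Fr}_n^{\,\xi}=\overline{G}_n\cap{\mathcal T}_n^{\xi}$ you still need $\Theta(\gamma)\in\overline{G}_n$, which is supplied by Theorem \ref{main} (via Theorem 2.1.2 of \cite{Lyons1}) — you should say so explicitly; second, the invariance under reparametrization and retracing that you mention is not actually needed, since the statement concerns the already well-defined element $\Theta(\gamma)$ and any piecewise smooth representative suffices for the estimate. On balance your argument is more elementary and self-contained, and yields a quantitative length-dependent bound, whereas the paper's is shorter and stays inside the ODE/holonomy framework by outsourcing the analysis to \cite{Neretin}.
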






A natural question is whether the induced topologies on the group of loops $\mathcal{L}_p$ from the Banach-Lie groups $\mbox{Fr}_n^{\,\xi}$ for $\xi>0$ coincide.

In section \ref{geometric_proof} we give a geometric alternative new proof of the Chen signature theorem, quoted as Theorem \ref{Chen_Signature_Theorem} here.  
Our proof is based on Theorem 1.7 of 
\cite{Holonomy} which states that  $\mathcal{L}^{\ psi,ret}(M,p)\ \cong\ \mathcal{L}^{\ psi,G}(M,p)$ for  connected and non-solvable $G$.  This approach allows us to obtain directly a result (Lemma \ref{fundamental}) whose content is  similar to the Fundamental Lemma 3.5 in  \cite{Chen2}. 
 The proof of Theorem \ref{Chen_Signature_Theorem}  also shows that the Chen signature theorem also holds 
for piecewise smooth curves, replacing the result in  \cite{Holonomy} by a result in 
 \cite{Tlas} giving that $\mathcal{L}^{\ ps,thin}(M,p)\ \cong\ \mathcal{L}^{\ ps,G}(M,p)$ for $G$ semisimple.   This stronger version of the signature theorem implies in turn the following theorem.

\begin{theorem}\label{generalization}
Consider the formal power series ring $R\,=\,\R[[X_1,\ldots\,X_n]]$ with the non-commutative variables $X_1,\ldots\,X_n$. Then, the Chen map \eqref{Chen_map} is a faithful representation in the ring $R$ of the group of loops in the class of piecewise smooth loops under thin homotopy relation $\mathcal{L}^{\ ps,thin}(\R^n,p)$.
\end{theorem}

Identifying the group of loops with its image under the holonomy map in Theorem \ref{main}, we have another corollary of the theorem.

\begin{corollary}\label{Lie_group_generated}
Consider $p$ in $\R^n$. For every $\xi>0$, there is a topology $\tau$ on $\mathcal{L}_p$ and a proper closed normal Banach-Lie subgroup $H$ of $\mbox{Fr}_n^{\,\xi}$ verifying the following:
\begin{enumerate}
\item $H$ contains an embedded copy of the space $(\mathcal{L}_p,\,\tau)$ as a topological subgroup; i.e. $\mathcal{L}_p\subset H$.
\item It is \textit{simple relative to} $\mathcal{L}_p$: there are no proper closed normal subgroups $H'\lhd H$ such that $\mathcal{L}_p\subset H'$.
\item The Lie algebra of $H$ contains an infinitely generated free Lie algebra.
\end{enumerate}
\end{corollary}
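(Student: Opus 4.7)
The plan is to realize $H$ as the connected Banach-Lie subgroup of $\mbox{Fr}_n^{\,\xi}$ whose Lie algebra $\mathfrak{h}$ is the closure of the Lie subalgebra generated by the log-signatures $\log\Theta(\mathcal{L}_p)$. Here I regard $\mathcal{L}_p\subseteq\mbox{Fr}_n^{\,\xi}$ through $Hol_{\,\nabla,\,p}$, as provided by Theorems \ref{main} and \ref{main2}, and equip it with the topology $\tau$ inherited as a subspace of $\mbox{Fr}_n^{\,\xi}$; this automatically makes the inclusion a topological group embedding, establishing (1).

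The structural assertion driving the rest is that $\mathfrak{h}$ coincides with the closed commutator subalgebra of the Lie algebra of $\mbox{Fr}_n^{\,\xi}$, namely the kernel of the abelianization $Y\mapsto Y_{(1)}$ onto the degree-one part $\R X_1\oplus\cdots\oplus\R X_n$. One inclusion is immediate since $Y_{(1)}=\sum_i X_i\int_\gamma dx_i=0$ for every loop $\gamma$. The reverse inclusion is a density statement: for each Hall basis element $w$ of the free Lie algebra on $X_1,\ldots,X_n$ of degree $k\geq 2$, one exhibits an explicit loop in $\Omega^{\,psi}(\R^n,p)$ whose log-signature has $w$ as leading term, then bootstraps through the lower central series, using the tensor-length decay built into the $\xi$-norm to approximate arbitrary elements of the closed commutator subalgebra by finite Lie brackets of log-signatures. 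Consequently $\mathfrak{h}$ is a closed Lie ideal of the Lie algebra of $\mbox{Fr}_n^{\,\xi}$, so $H$ is closed and normal in $\mbox{Fr}_n^{\,\xi}$, and proper because $\mathfrak{h}$ lies in the kernel of the continuous, nonzero abelianization homomorphism.

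Condition (2) I verify at the Lie-algebra level. Given any closed Lie ideal $\mathfrak{h}'\lhd\mathfrak{h}$ with $\log\Theta(\mathcal{L}_p)\subseteq\mathfrak{h}'$, an induction on bracket length using the base case and $[\mathfrak{h},\mathfrak{h}']\subseteq\mathfrak{h}'$ places every iterated bracket $[Z_1,[Z_2,\ldots,[Z_{k-1},Z_k]\ldots]]$ with $Z_i\in\log\Theta(\mathcal{L}_p)$ inside $\mathfrak{h}'$; closing up this algebraic Lie subalgebra forces $\mathfrak{h}'=\mathfrak{h}$, and this integrates via the exponential to $H'=H$. For (3), the algebraic derived subalgebra of the free Lie algebra on $X_1,\ldots,X_n$ sits inside $\mathfrak{h}$ by the same density argument, and is a free Lie algebra of countably infinite rank via the classical Shirshov–Witt theorem applied to the rank-$n$ free Lie algebra ($n\geq 2$).

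The main obstacle I anticipate is the density claim of the second paragraph. Realizing a Lie word as leading term of a log-signature is a standard exercise in each finite step-$d$ free nilpotent truncation, but promoting such finite-order matching to density in the $\xi$-dependent Banach norm of $\mbox{Fr}_n^{\,\xi}$ requires a careful convergence estimate, for which I would exploit the explicit iterated-integral form of $\Theta$ together with the tensor-length decay used in the definition of $\mbox{Fr}_n^{\,\xi}$. A secondary subtlety is passing from the identity $\mathfrak{h}'=\mathfrak{h}$ at the Lie-algebra level to $H'=H$ at the group level when $H'$ is only a priori a closed subgroup of $H$; one handles this by working with the identity component of $H'$ and invoking the uniqueness of the connected Banach-Lie subgroup with a prescribed closed Lie subalgebra.
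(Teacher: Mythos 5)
Your construction runs in the opposite direction from the paper's, and this creates a genuine gap in item (2). The paper takes $H$ to be the \emph{smallest} closed normal subgroup of $\mbox{Fr}_n^{\,\xi}$ containing $\mathcal{L}_p$ (obtained as the intersection of all such, via a Zorn-type argument), so that relative simplicity holds by sheer minimality, with no Lie-theoretic input; item (3) is then extracted by observing that the Lie algebra $\mathfrak{h}$ of this normal subgroup is a nontrivial ideal of ${\mathfrak{fr}}_n^{\,\xi}$ and invoking the theorem of Ekici on ideals of free Lie algebras. You instead define $H$ top-down, as the connected subgroup integrating the closure of the Lie algebra generated by the log-signatures (conjecturally the closed commutator subalgebra). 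With that definition, item (2) is no longer automatic and your verification does not cover what the statement quantifies over: you only rule out subgroups $H'$ that are integral subgroups of closed Lie ideals $\mathfrak{h}'$ satisfying $\log\Theta(\mathcal{L}_p)\subseteq\mathfrak{h}'$. An arbitrary closed normal subgroup $H'\lhd H$ with $\mathcal{L}_p\subset H'$ need not be of this form: in a Banach-Lie group a closed subgroup is not automatically a Lie subgroup, and even when it is, $\Theta(\gamma)\in H'$ only yields $\exp\bigl(k\log\Theta(\gamma)\bigr)\in H'$ for integer $k$, not the whole one-parameter subgroup, so the Lie algebra $\{X:\exp(tX)\in H'\ \forall t\}$ need not contain $\log\Theta(\gamma)$. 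Concretely, the paper's minimal closed normal subgroup containing $\mathcal{L}_p$ sits inside your $H$ and is closed and normal there; unless you prove it equals your $H$ --- a group-level statement strictly stronger than Lie-algebra density --- it is itself a potential counterexample to your item (2).

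Separately, the density claim on which your whole architecture rests (that Lie brackets of log-signatures of loops are dense in the closed commutator subalgebra of ${\mathfrak{fr}}_n^{\,\xi}$ for the $\xi$-norm) is left unproved; you flag it as the main obstacle, but it is doing all the work in your properness argument, your item (2), and your item (3), whereas the paper needs no such identification of $\mathfrak{h}$ anywhere. On the positive side, you do address properness of $H$ (via the abelianization onto $\R X_1\oplus\cdots\oplus\R X_n$), a point the paper's proof passes over in silence, and your use of Shirshov--Witt for item (3) would be a legitimate alternative to Ekici's theorem \emph{if} the density claim were established. As it stands, the proposal is not a proof: either supply the density statement together with the group-level identification of $H$ with the normal closed subgroup generated by $\Theta(\mathcal{L}_p)$, or switch to the paper's minimality definition of $H$, which dissolves both difficulties.
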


As an immediate corollary of the previous theorem we have the following complement to Theorem \ref{main}.

\begin{corollary}
Theorem \ref{main} and its corollaries hold also for the group of loops in the class of piecewise smooth loops under thin homotopy relation $\mathcal{L}^{\ ps,thin}(\R^n,p)$.
\end{corollary}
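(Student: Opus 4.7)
The plan is to show that the construction already carried out for Theorem \ref{main} passes transparently to the larger group $\mathcal{L}^{\ ps,thin}(\R^n,p)$. The key observation is that the bundle $(E,\,\R^n,\,\overline{G}_n,\,\pi,\,\nabla)$ is a Fréchet principal bundle over $\R^n$ and its connection $\nabla$ is a geometric object on $\R^n$; consequently parallel transport, and therefore the holonomy map $Hol_{\,\nabla,\,p}$, is intrinsically defined on \emph{every} piecewise smooth loop based at $p$, with no immersion hypothesis required. It is classical that for a smooth connection, holonomy is invariant under thin homotopy, so $Hol_{\,\nabla,\,p}$ descends to a well-defined group homomorphism on $\mathcal{L}^{\ ps,thin}(\R^n,p)$.

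First I would verify that the identity $Hol_{\,\nabla,\,p}(\gamma)=\Theta(\gamma)$ from Theorem \ref{main} continues to hold on arbitrary piecewise smooth loops. Both sides are obtained by iterated path integration along $\gamma$ (for $\Theta$ by definition, for $Hol_{\,\nabla,\,p}$ by solving the parallel transport equation and iterating the resulting Picard expansion), and neither operation distinguishes immersed from non-immersed paths: the identity therefore extends by the same local, segment-by-segment computation used in the proof of Theorem \ref{main}. With this identification in hand, Theorem \ref{generalization} asserts that $\Theta$ is a faithful representation of $\mathcal{L}^{\ ps,thin}(\R^n,p)$, which immediately yields that $Hol_{\,\nabla,\,p}$ is a monomorphism on that group.

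The two remaining statements transfer essentially verbatim. For the analogue of Theorem \ref{main2} one re-runs the analytic estimates that place $\Theta(\gamma)$ in every Banach-Lie group $\mbox{Fr}_n^{\,\xi}$; these are bounds on the coefficients of the series $\Theta(\gamma)$ and depend only on $\gamma$ being piecewise smooth. The analogue of Corollary \ref{Lie_group_generated} then follows by pulling back the subspace topology from $\mbox{Fr}_n^{\,\xi}$ to $\mathcal{L}^{\ ps,thin}(\R^n,p)$ along the monomorphism $Hol_{\,\nabla,\,p}$, and defining $H$ as the smallest closed normal Banach-Lie subgroup of $\mbox{Fr}_n^{\,\xi}$ containing this image, exactly as in the original argument.

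The main obstacle I expect is bookkeeping rather than substance: one has to verify that no step in the original proof of Theorem \ref{main} or of Corollary \ref{Lie_group_generated} uses piecewise immersion in an essential way (for instance through the tassels and webs technology of Baez--Sawin, which was needed only for factorization arguments on the domain side), and that the purely analytic construction of $\Theta$ together with the parallel transport equation remains valid for arbitrary piecewise smooth $\gamma$. Since both the iterated integrals and the parallel transport ODE are defined piece by piece and are manifestly thin-homotopy invariant, this check should be routine, and the corollary then reduces to a direct application of Theorem \ref{generalization}.
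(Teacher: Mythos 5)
Your proposal is correct and follows essentially the same route as the paper, which simply declares the corollary an immediate consequence of Theorem \ref{generalization} once one notes that the holonomy of $\nabla$ is defined, thin-homotopy invariant, and equal to $\Theta$ on all piecewise smooth loops. You merely spell out the routine verifications that the paper leaves implicit.
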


\section{Preliminaries}\label{s.prelim}

\subsection{Loops and Holonomy}

Let $I$ be the unit interval and $M$ a smooth manifold. We begin by recalling some standard notations. A {\em path} in $M$ is a piecewise smooth function from $I$ to $M$, and we say that two paths $a,\,b:\,I \rightarrow\, M$ are equivalent modulo
reparametrization if there is an orientation preserving piecewise smooth homeomorphism $\sigma\,:I\rightarrow I$ with piecewise smooth inverse such that $a\circ \sigma\,=\,b$.  
Denote by $\I_0(M)$ the quotient set under this equivalence relation. If $a(1)\,=\,b(0)$ we define $ab$ and $a^{-1}$ as follows: $ab\,(t)=a\,(2t)$ if $t\in [0,1/2]$ and $ab\,(t)=b\,(2t-1)$ if $t\in [1/2,1]$; $a^{-1}(t) = a\,(1-t)$ for every $t\in [0,1]$. Let
$e_p \in \I_0(M)$ be the constant path at the point $p$, i.e. $e_p(t) =p$ for every $t\in [0,1]$.

We need to consider another preliminary equivalence, which amounts to collapse constant sub-paths. Let  $a$ be a non-constant path in $M$. We shall define a {\em minimal form} $a_r$ for $a$ as follows: let $I_i \subset I$ be the family of maximal subintervals in which
$a$ is constant, and let $\sigma: I \rightarrow I$ be a surjective non-decreasing piecewise smooth function, constant in each $I_i$ and strictly
increasing in $I - \bigcup_i\, I_i$. Then there is $a_r: I \rightarrow M$ such that $a\, =\,a_r \circ\sigma$, which is non-constant on any subinterval of $I$ (this map is obtained by a universal property of quotients). Different choices of the function $\sigma$ give rise to minimal forms that are equivalent modulo reparametrization, and moreover, if two paths $a$ and $b$ are equivalent, so are any of their minimal forms $a_r$ and $b_r$. This allows us to define the {\em minimal class} of an element of $\I_0(M)$ (as the class of any minimal form of any representative), and take a quotient $\I_1(M)$ where we identify two elements of $\I_0(M)$ if they have the same minimal class (extending the definition to constant paths in the trivial way). The product and inverse are well defined on $\I_1(M)$, and the classes of constant paths are units for the product.

Let $\I^{psi}(M)\subset \I_1(M)$ be the set of classes of either constant paths or paths that are {\em piecewise smoothly immersed}, i.e. a finite concatenation of smooth immersions.
Notice that for $\alpha\in \I^{psi}(M)$ there are well defined notions of endpoints $\alpha(0)$ and $\alpha(1)$. Throughout the paper we will abuse of notation and refer to the elements $\alpha \in \I^{psi}(M)$ also as {\em curves}, and say that $\alpha$ is a {\em closed curve} if $\alpha(0)=\alpha(1)$.

In the set $\I^{psi}(M)$, consider the equivalence
relation finitely generated by the identifications $\alpha aa^{-1}\beta\sim\alpha\beta$. This is called the \emph{retrace relation}. With the formal definition in hand, we recall the concepts from the introduction: Let ${\mathcal E}^{\,psi,ret}(M)$ denote the quotient set of $\I^{psi}(M)$ under retrace relation, and let ${\mathcal L}^{\ psi,ret}(M,p)\subset {\mathcal E}^{\,psi,ret}(M)$ be the projection under the quotient map of the set of closed curves starting and ending at the point $p$. Note that
${\mathcal L}^{\ psi,ret}(M,p)$ is a group under concatenation 
whose neutral element is the equivalence class of $e_p$,
the constant path at $p$.

Another equivalence relation in $\I^{psi}(M)$ is given by the finite composition of thin homotopies. A \emph{thin homotopy} between two curves $\alpha$ and $\gamma$ is a homotopy $\eta: [0,1]^2\rightarrow M$ such that its image is contained in the union of the images of the curves, that is
$$\eta\left([0,1]^2\right)\,\subseteq\,\alpha\left([0,1]\right)\,\cup\,\beta\left([0,1]\right).$$
Denote the quotient under this relation by ${\mathcal E}^{\,psi,ret}(M)$ and let $\mathcal{L}^{\ psi,thin}(M,p)$ be the classes based at $p$. Note that, as before, it is a group under concatenation 
whose neutral element is the equivalence class of $e_p$, the constant path at $p$.

We can make analogous definitions in the class of piecewise smooth paths, obtaining the spaces ${\mathcal E}^{\,ps,ret}(M)$, ${\mathcal E}^{\,ps,thin}(M)$, ${\mathcal L}^{\ ps,ret}(M,p)$ and $\mathcal{L}^{\ ps,thin}(M,p)$, respectively.

It turns out that in general the resulting groups under the retrace and $G$-holonomy relation are not isomorphic. However, as it was shown in \cite{Holonomy}, it is quite remarkable that for every connected and non-solvable Lie group $G$, these groups are isomophic in the class of piecewise analytic loops,
\begin{equation}\label{Spalla_analytic}
\mathcal{L}^{\ pa,ret}(M,p)\ \cong\ \mathcal{L}^{\ pa,G}(M,p),\qquad G\ \rm{connected\ and\ non-solvable.}
\end{equation}

In \cite{Tlas}, Tlas proves a similar result in the concatenable class of $C^1$ loops based at $p$ with zero derivative at $p$. He proves that if the Lie group $G$ is semisimple then a pair of loops in this class are $G$-holonomy related if and only if they are related by a thin homotopy. In the class of piecewise smooth loops, Tlas result is equivalent to the following isomorphism under \emph{thin homotopy relation} (see Remark $4$ in \cite{Meneses})
\begin{equation}\label{Tlas}
\mathcal{L}^{\ ps,thin}(M,p)\ \cong\ \mathcal{L}^{\ ps,G}(M,p),\qquad G\ \rm{semisimple.}
\end{equation}

Along the lines of this result, Meneses in \cite{Meneses} comments that the analog result of \eqref{Spalla_analytic} claimed in \cite{Holonomy} is not valid in the class of piecewise smooth loops and he gives a counterexample of a thin and non-retraceable loop, hence
\begin{equation}\label{Spalla_smooth}
\mathcal{L}^{\ ps,ret}(M,p)\ \not\cong\ \mathcal{L}^{\ ps,G}(M,p),\qquad G\ \rm{semisimple.}
\end{equation}

One of the difficulties with piecewise smooth loops, in contrast with piecewise analytic ones, is that they can intersect in very complicated ways. This led Baez and Sawin in \cite{BaezSawin} to consider instead the class of piecewise smoothly immersed loops $\Omega^{psi}(M,p)$. In this class they develop the technology of \emph{tassels} and \emph{webs}. A careful inspection into the work in \cite{Holonomy}, which uses the technology developed in \cite{BaezSawin}, shows that the claimed result holds for the class of piecewise smoothly immersed loops.

The following is Theorem 1.7 from \cite{Holonomy} and will be referred to as the \emph{holonomy theorem} throughout the paper.   \footnote{We do not know why Spallanzani did not write the piecewise immersion hypothesis.}

\begin{theorem}\label{spa}
If $G$ is a connected and non-solvable Lie group, then there is a natural isomorphism $\mathcal L^{\ psi, ret}(M,p)\,\cong\, \mathcal{L}^{\ psi, G}(M,p)$ for every point $p$ in $M$.
\end{theorem}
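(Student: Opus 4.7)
The plan is to define the natural map $\mathcal{L}(M,p)\to \mathcal{L}^{\ psi,G}(M,p)$ sending a retrace class to its $G$-holonomy class, and to prove it is a bijection. Well-definedness is immediate: the retracing identity $\alpha\,a\,a^{-1}\beta=\alpha\beta$ is preserved by parallel transport in any $G$-bundle with connection, since transport along $a$ is inverted by transport along $a^{-1}$. Surjectivity is automatic because both objects are quotients of the same set $\Omega^{psi}(M,p)$. The content of the theorem is therefore injectivity: if $\alpha,\beta$ are indistinguishable by $G$-holonomy on every $G$-principal bundle with connection over $M$, then $\alpha\sim_{ret}\beta$.

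To prove injectivity, I would reduce to the case $\gamma=\alpha\beta^{-1}$ and show that if $\gamma$ has trivial holonomy in every $G$-bundle with connection, then $\gamma\sim_{ret} e_p$. The main tool is the technology of \emph{tassels} and \emph{webs} for piecewise smoothly immersed loops developed in \cite{BaezSawin}. This produces a canonical reduced factorization of $\gamma$ as a concatenation of elementary tassels attached to a supporting web, with the crucial property that $\gamma\sim_{ret}e_p$ if and only if the associated cyclic word in the free group generated by the tassels is the identity.

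The second step is to show that any nontrivial reduced factorization is detected by some $G$-holonomy. On a tubular neighborhood of the supporting web one constructs $\mathfrak{g}$-valued connection $1$-forms, supported so that the parallel transport around the $i$th tassel realizes a prescribed element $g_i\in G$ chosen freely in advance, with the connection trivialized outside this neighborhood; gluing gives a $G$-bundle with connection whose holonomy along $\gamma$ is precisely the word $w(g_1,\ldots,g_k)$ associated with the reduced cyclic factorization. In this way injectivity reduces to the algebraic question: given a nontrivial word $w$ in free generators, do there exist $g_1,\ldots,g_k\in G$ with $w(g_1,\ldots,g_k)\neq e$?

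The hard part, and precisely where the non-solvability hypothesis is essential, is this final algebraic step. For $G$ connected and non-solvable the derived series $G\supseteq [G,G]\supseteq\cdots$ never terminates, and by a Tits-type argument $G$ contains non-abelian free subgroups of every finite rank; hence every nontrivial element of a free group is realized as $w(g_1,\ldots,g_k)$ for a suitable choice of the $g_i$. This guarantees that the constructed holonomy is nontrivial whenever the reduced factorization of $\gamma$ is, completing the proof of injectivity and hence the isomorphism. The main obstacle in making this rigorous is the control of the tassel/web decomposition and the local-to-global passage from petal holonomies to the full $G$-word; this is carried out carefully in \cite{BaezSawin} and \cite{Holonomy}, and is the technical core of the argument.
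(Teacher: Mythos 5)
First, a point of comparison: the paper does not prove this statement at all --- it is quoted verbatim as Theorem 1.7 of \cite{Holonomy} (Spallanzani) and used as a black box, so there is no internal proof to measure your argument against. What you have written is a reconstruction of the strategy of \cite{Holonomy} and \cite{BaezSawin}, and in outline it is faithful to that strategy: well-definedness and surjectivity are indeed immediate, the content is injectivity, and the reduction goes through a web/tassel factorization of a non-retraceable loop followed by the construction of a connection detecting the associated word.

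There is, however, one step in your sketch that would fail as written, and it is precisely the point where the hypotheses on $G$ do real work. You claim that the connection can be built so that ``the parallel transport around the $i$th tassel realizes a prescribed element $g_i\in G$ chosen freely in advance.'' For piecewise \emph{analytic} loops (independent hoops) this is true, but for piecewise smoothly immersed loops it is not: the whole reason Baez and Sawin introduced webs is that smooth curves can accumulate on one another, and the realizable tuples of holonomies of a web form only a subset of $G^k$ --- essentially a neighborhood of $(e,\ldots,e)$ cut out by consistency conditions --- not all of $G^k$. Consequently the algebraic question you reduce to is not ``does some $k$-tuple in $G$ fail the law $w$,'' but ``does some \emph{realizable} $k$-tuple, arbitrarily close to the identity, fail the law $w$.'' Closing this gap requires the stronger fact (due to Epstein, and this is where connectedness of $G$ is genuinely used, not just non-solvability) that in a connected non-solvable Lie group the $k$-tuples generating a free group are dense in every neighborhood of $(e,\ldots,e)$ in $G^k$; note that for a disconnected or discrete non-solvable group the neighborhood constraint would be fatal. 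Your phrase ``the derived series never terminates'' should also be read as ``never reaches the trivial group'' --- for a finite-dimensional Lie group the derived series always stabilizes, at a nontrivial perfect subgroup in the non-solvable case. With the Epstein-type density statement substituted for the bare existence of free subgroups, your outline matches the actual proof in \cite{Holonomy}.
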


All of these loop spaces equivalences have in common some sort of factorization. In \cite{Tlas}, Tlas associates a transfinite word for every loop in the class described before and identifies a pair of loops if they have the same reduced word. A loop whose reduced word is trivial is called a \emph{whisker} by him. In \cite{Holonomy}, Spallanzani uses the technology of tassels and webs developed by Baez and Sawin \cite{BaezSawin} to factorize smoothly immersed loops. The factorization in the piecewise analytic class follows almost directly by definition.


Another natural question is, given a manifold, how to reconstruct the principal bundle with connection from its holonomy map. This led to an abstract formulation of holonomy both in mathematics \cite{Milnor, Lashof, Teleman1, Teleman2} and theoretical physics as well \cite{Barret, BaezSawin, Gambini, Loll}. We recommend the recent paper \cite{Meneses} by Meneses for references and historical account of the theory.


As it was mentioned at the introduction, in general the retrace and thin relations are not equivalent. However, in the class of piecewise smoothly immersed  closed curves considered in this paper, they are. In section \ref{section_Juan}, we will prove Proposition \ref{Equivalencia_ret_thin} which states the existence of a natural isomorphism
$${\mathcal L}^{\ psi,ret}(M,p)\,\cong\, \mathcal{L}^{\ psi,thin}(M,p).$$
In view of this isomorphism, we will simply denote these groups by $\mathcal{L}(M,p)$.  In the case where the manifold $M$ is the Euclidean space $\R^n$, the case considered in this paper, from now on we will simply denote by $\mathcal L_p$ the group $\mathcal{L}(\R^n, p)$.

\subsection{Iterated Integrals and Signature}\label{subsection_Signature}\label{signature_section}
  
If  $\omega_{1},\,  \omega_2,\, \ldots,\,  \omega_q$ are one forms in a manifold $M$ and $\gamma:I\rightarrow M$ is a piecewise regular path, define the iterated integral by the formula

\[   \int_{\gamma}  \omega_{1}  \omega_2 \ldots  \omega_q \,=\, \int_{t_1<\ldots<t_q}\,  \omega_{1}(\gamma'(t_1))\,   \omega_2(\gamma'(t_2))  \ldots  \omega_q(\gamma'(t_q))\,dt_1\ldots dt_q.\]

Recall from the introduction that if $\gamma$ is a piecewise regular curve $\gamma$, Chen \cite{Chen2} defines the formal power series

\begin{equation}\label{Chen_map}
\Theta(\gamma)\, =\,1\,+\, \sum^{\infty}_{p=1}\,\  \sum_{i_1,\,i_2,\, \ldots,\,i_p}\  \,\int_{\gamma} dx_{i_1}dx_{i_2} \ldots dx_{i_p}\        X_{i_1}X_{i_2} \ldots X_{i_p}
\end{equation}
in the noncommutative indeterminates $X_1, \ldots, X_n$. This will be called the \textit{Chen signature map}. Chen proved that this map is a faithful representation of the group of loops $\mathcal{L}^{\ psi,ret}(\R^n,p)$ \cite{Chen1, Chen2prima, Chen2}. For other results concerning iterated integrals, see \cite{Chen3, Chen4}. In recent years this theory had a resurgence in the context of stochastic processes, 
 bounded variation paths,  and rough paths \cite{Lyons1, Lyons2, Lyons2prima, Lyons3}.
 

The Chen signature theorem can be stated as follows.

\begin{theorem}\label{Chen_Signature_Theorem}
If $\Theta(\gamma)=1$, then $\gamma$ is trivial in ${\mathcal E}^{\,psi,ret}(\mathbb R^n)$. 
\end{theorem}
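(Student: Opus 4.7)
The plan is to derive the statement from the holonomy theorem (Theorem \ref{spa}). Fix a connected non-solvable Lie group $G$, for instance $G=SL(2,\R)$. By Theorem \ref{spa} we have $\mathcal{L}(\R^n,p)\cong \mathcal{L}^{\,psi,G}(\R^n,p)$, so it suffices to show that $\Theta(\gamma)=1$ forces $\gamma$ to have trivial holonomy for every $G$-bundle with connection over $\R^n$. Since $\R^n$ is contractible, every such bundle is trivial and a connection amounts to a $\g g$-valued $1$-form $\omega$.

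First, I would handle the constant coefficient case $\omega=\sum_i A_i\,dx_i$ with $A_i\in\g g$. The Dyson iterated integral expansion of parallel transport gives
\begin{equation*}
\mathrm{Hol}_\omega(\gamma)\,=\,\sum_{p=0}^{\infty}\sum_{i_1,\ldots,i_p}A_{i_1}\cdots A_{i_p}\int_\gamma dx_{i_1}\cdots dx_{i_p},
\end{equation*}
which is literally the image of $\Theta(\gamma)$ under the algebra morphism $X_i\mapsto A_i$; hence $\Theta(\gamma)=1$ kills this holonomy for every choice of the $A_i$.

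Next, I would extend to polynomial coefficient forms $\omega=\sum_i P_i(x)\,dx_i$ with $P_i\in\g g\otimes\R[x_1,\ldots,x_n]$. Using $x_j(\gamma(t))=p_j+\int_0^t d\gamma_j$ recursively, each monomial $\gamma(t_k)^\alpha$ appearing inside the iterated integrals unfolds, via the shuffle product of iterated integrals, into a finite $\R$-linear combination of higher-order basic iterated integrals $\int_\gamma dx_{j_1}\cdots dx_{j_q}$. Hence $\mathrm{Hol}_\omega(\gamma)$ is a universal polynomial expression in the basic Chen iterated integrals, all of which vanish when $\Theta(\gamma)=1$. For arbitrary smooth $\omega$, I would approximate its coefficients uniformly by polynomials on a compact neighborhood of $\gamma(I)$; by a Gronwall estimate on the parallel transport ODE, holonomy depends continuously on the connection in $C^0$-norm, so $\mathrm{Hol}_\omega(\gamma)=1$ in the limit as well. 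Theorem \ref{spa} then yields that $\gamma$ is trivial in $\mathcal{L}_p$, and \emph{a fortiori} in $\mathcal{E}^{\,psi,ret}(\R^n)$.

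The main obstacle I anticipate is the combinatorial bookkeeping in the polynomial step: one must verify that the unfolding produces only basic iterated integrals of the coordinate forms $dx_j$, with coefficients that are \emph{universal}, independent of $\gamma$, polynomials in the basepoint $p$ and the matrices $A_i$. A secondary technical point is the smooth approximation, where Weierstrass density must be coordinated with a Gronwall-type continuity bound so that $C^0$-convergence of polynomial connections transfers to convergence of holonomies along $\gamma$.
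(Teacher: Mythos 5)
Your proposal is correct and rests on the same two pillars as the paper's proof: the holonomy theorem (Theorem \ref{spa}) applied to $SL(2,\R)$, and the shuffle-product reduction of polynomial-coefficient iterated integrals to elementary ones (the paper's Lemma \ref{real} and Corollary \ref{elementary}). The genuine difference is one of direction, and of where the analytic approximation is placed. The paper argues contrapositively: a nontrivial $\gamma$ admits, by Theorem \ref{spa}, a compactly supported $\mathfrak{sl}(2,\R)$-valued connection with nontrivial holonomy; extracting one matrix entry of one term of the Dyson series yields a single nonzero scalar iterated integral $\int_\gamma\omega_1\cdots\omega_q$ of compactly supported one-forms, which is perturbed to polynomial forms using only the multilinearity and elementary continuity of a $q$-fold iterated integral in its arguments; the shuffle reduction then exhibits a nonzero elementary integral, i.e.\ $\Theta(\gamma)\neq 1$. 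You run the implication forward and must therefore control the holonomy of \emph{every} smooth connection, which forces the approximation to take place at the level of the full parallel-transport ODE and requires the Gronwall estimate you mention; this works, but it is strictly heavier than what the paper needs, since the paper only ever approximates inside one scalar integral and never invokes continuity of holonomy in the connection. Two small points to repair in your version: (i) Theorem \ref{Chen_Signature_Theorem} is stated for ${\mathcal E}^{\,psi,ret}(\R^n)$, which contains non-closed paths for which holonomy is undefined; dispose of these first by noting that the degree-one part of $\Theta(\gamma)$ is $\sum_i\bigl(\gamma_i(1)-\gamma_i(0)\bigr)X_i$, so $\Theta(\gamma)=1$ already forces $\gamma$ to be closed (the paper does this with a compactly supported function separating the endpoints); (ii) the ``combinatorial bookkeeping'' you flag as the main obstacle is exactly the content of Lemma \ref{real} and the induction on $q$ carried out in Section \ref{geometric_proof}, so it is a real step that must be written out, not merely cited as the shuffle product.
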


\section{Retrace and thin relation on the piecewise immersed class}\label{section_Juan}

Next we prove Proposition \ref{Equivalencia_ret_thin} which states the existence of a natural isomorphism
$$\mathcal{L}^{\ psi,ret}(M,p)\ \cong\ \mathcal{L}^{\ psi,thin}(M,p).$$
That is to say that for piecewise smoothly immersed curves, thin homotopy is the same as retrace equivalence. We will use the concept of {\em tree-like paths} of Hambly and Lyons \cite{Lyons2prima}, in the equivalent formulation introduced in \cite{Lyons2} and \cite{Lyons4}:

\begin{definition} \label{tree-like} A path $\gamma:[a,b]\to M$ is tree-like if there is an $\R$-tree $T$ and continuous functions $\phi:[a,b]\to T$ and $\psi:T\to M$ such that $\gamma=\psi\circ\phi$ and $\phi(a)=\phi(b)$.
\end{definition} 

Namely, a tree-like path is a loop that factors through a loop in an $\R$-tree. In \cite{Levy}, L\'evy shows that for bounded variation curves, being a tree-like path is equivalent to being thin homotopic to the constant path. Also, for $C^1$ loops with zero derivative at the endpoints, Tlas \cite{Tlas} shows directly the equivalence between Definition \ref{tree-like} and being thinly homotopic to a constant path. Note that a piecewise smoothly immersed path is of bounded variation, and also can be reparametrized to be $C^1$ with zero derivative at its endpoints, so these results apply to them. 

Proposition \ref{Equivalencia_ret_thin} follows immediately from the previous remarks and the following lemma:

\begin{lemma} Let $\gamma:[a,b]\to M$ be a tree-like path that is piecewise smoothly immersed. Then $\gamma$ is retraceable (i.e. retrace equivalent to a constant path). 
\end{lemma}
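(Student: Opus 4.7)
The plan is to exhibit a finite sequence of retrace moves reducing $\gamma$ to a constant, using the tree factorization $\gamma=\psi\circ\phi$ with $\phi:[a,b]\to T$, $\psi:T\to M$ continuous and $\phi(a)=\phi(b)$. First I would refine the subdivision of $[a,b]$ so that each smooth piece $\gamma_i:=\gamma|_{[t_{i-1},t_i]}$ is actually a smooth embedding; this is possible since immersions are locally injective. Writing $\phi_i:=\phi|_{[t_{i-1},t_i]}$ and $v_j:=\phi(t_j)$, the injectivity of $\gamma_i=\psi\circ\phi_i$ forces $\phi_i$ to be injective, and a continuous injection from an interval into an $\R$-tree is a homeomorphism onto the unique geodesic arc between its endpoints; hence $\phi_i$ parameterizes $[v_{i-1},v_i]\subset T$ and $\psi$ restricts to a topological embedding on each such arc.

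Next I would consider the finite subtree $T_V\subset T$ spanned by $V=\{v_0,\dots,v_N\}$ and choose a leaf $v_i$ of $T_V$ (its leaves must lie in $V$). Because $v_i$ is a leaf, both $v_{i-1}$ and $v_{i+1}$ belong to the unique component of $T_V\setminus\{v_i\}$, so the geodesics $[v_i,v_{i-1}]$ and $[v_i,v_{i+1}]$ share a nontrivial initial arc $[v_i,u]$, where $u$ is their first divergence point. Splitting $\phi_i=\alpha\cdot\beta$ with $\beta$ parameterizing $[u,v_i]$, and $\phi_{i+1}=\beta'\cdot\alpha'$ with $\beta'$ parameterizing $[v_i,u]$, both $\psi\circ\beta$ and $(\psi\circ\beta')^{-1}$ are smooth embeddings with the same embedded image $\psi([u,v_i])$ and matching endpoints, so they agree in $\I^{psi}(M)$. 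A single retrace move therefore reduces $\gamma_i\cdot\gamma_{i+1}$ to $(\psi\circ\alpha)\cdot(\psi\circ\alpha')$, leaving the rest of $\gamma$ untouched.

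I would then induct on the lexicographic pair $(N,\,\#\text{leaves}(T_V))$. If $u$ equals $v_{i-1}$ or $v_{i+1}$ the retrace collapses at least one smooth piece and $N$ strictly decreases; otherwise $u$ is a genuine branch point, $N$ is unchanged but in the new walk $v_i$ has been replaced by $u$, which is no longer a leaf of the updated convex hull (since $v_{i-1}$ and $v_{i+1}$ lie in distinct components of its complement at $u$), so the number of leaves strictly decreases. The base case $N=0$ is the constant loop, which is trivially retraceable, and the subpaths produced by the reduction remain smooth embeddings, so no further refinement is needed between steps.

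The main technical hurdle is verifying the identification $\psi\circ\beta=(\psi\circ\beta')^{-1}$ in $\I^{psi}(M)$: this depends on $\psi$ being injective on the arc $[v_i,u]\subset T$, which is exactly what the piecewise embedding refinement of the first step supplies, together with the elementary fact that two smooth embeddings of intervals onto a common embedded arc with matching endpoints differ by an orientation-preserving piecewise smooth reparameterization. It is also where the immersion hypothesis is essential: without it, $\psi$ could fold an arc onto itself and the candidate move would fail to be a genuine $aa^{-1}$, which is precisely the obstruction underlying Meneses' counterexample in the merely piecewise smooth category.
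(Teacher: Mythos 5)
Your overall strategy is the same as the paper's: pass to the finite subtree $T_V$ spanned by the points $v_j=\phi(t_j)$, note that each smooth embedded piece of $\gamma$ forces $\phi_i$ to parameterize the geodesic $[v_{i-1},v_i]$, and then reduce the resulting closed edge-path in a finite tree by cancelling backtracks, each of which is a legitimate retrace move because $\psi$ is injective on each geodesic $[v_{i-1},v_i]$. Your discussion of why the cancelled pair is genuinely of the form $aa^{-1}$ in $\I^{psi}(M)$ (two smooth embeddings of a common embedded arc with matching endpoints differ by a smooth reparameterization) is in fact more careful than the paper, which simply asserts that a closed edge-path in a simplicial tree is retraceable.

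However, your termination argument has a genuine gap: the lexicographic measure $\bigl(N,\ \#\mathrm{leaves}(T_V)\bigr)$ need not decrease. The problem is that the pruned leaf $v_i$ may occur elsewhere in the walk, in which case the vertex set, the convex hull $T_V$, and its set of leaves are all unchanged after the move, while $N$ also stays the same. Concretely, let $T$ be a tripod with center $c$ and extremities $p,q,r$, embedded in $\R^3$ so that the arcs $[p,q]$ and $[q,r]$ are smooth embedded arcs (take the leg toward $q$ along the $x$-axis and the legs toward $p$ and $r$ flat-tangent to it at $c$ in the $y$- and $z$-directions, respectively), and let $\gamma$ realize the closed walk $(p,q,r,q,p)$ with each of the four segments a smooth embedding. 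Here $N=4$ and the hull is the whole tripod with leaves $\{p,q,r\}$. Applying your move at the leaf occurrence $i=1$ gives $u=c$ and the new walk $(p,c,r,q,p)$: still $N=4$, and since $q$ still occurs at position $3$, the hull is still the whole tripod with the same three leaves, so the measure does not drop. The argument is easily repaired — and this is essentially what the paper's one-line appeal to simplicial trees does — by fixing once and for all the simplicial structure on $T_V$ whose vertex set is $V$ together with the (finitely many) branch points of $T_V$, writing the walk as a closed edge-path in this fixed finite tree, and inducting on the number of edges of the path: a nonconstant closed edge-path in a tree must contain a backtrack $e\bar e$, and removing it strictly decreases the edge count. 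With that substitution your proof is complete and matches the paper's.
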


\begin{proof}
Let $a=t_0<t_1<\cdots<t_k=b$ be a partition so that $\gamma|_{[t_i,t_{i+1}]}$ is a smooth embedding for $i=0,\ldots,k-1$. Consider the $\R$-tree $T$ and the maps $\phi:[a,b]\to T$ and $\psi:T\to M$ as in Definition \ref{tree-like}. Notice that $\phi|_{[t_i,t_{i+1}]}$ must be injective, so its image is the unique geodesic in $T$ with endpoints $\phi(t_i)$ and $\phi(t_{i+1})$. Then the image of $\phi$ is the convex hull of finitely many points, namely $\phi(t_i)$ for $i=0,\ldots,k$. In an $\R$-tree, the convex hull of finitely many points is (isometric to) a finite simplicial tree (this fact can be obtained by straightforward induction on the number of points). Moreover, each injective path $\phi|_{[t_i,t_{i+1}]}$ can be reparametrized to be the geodesic between $\phi(t_i)$ and $\phi(t_{i+1})$. After such reparametrization, $\phi$ is a closed edge-path in a simplicial tree, which is retraceable, and then so is $\gamma=\psi\circ\phi$.   

\end{proof}

\section{Holonomy and Signature}\label{section_holonomy_signature}

In this section we construct a principal bundle with connection whose holonomy is a monomorphism and coincides with the Chen signature map on loops.

Let $\mathcal T_n$ be the real associative algebra generated by the elements $X_1,\,X_2,\,\ldots,\,X_n$ and denote by $\mathfrak{g}_n$ the real Lie algebra freely generated by these.
Let $\overline{\mathcal T}_n$ and  $\overline{\mathfrak g}_n\subset \overline{\mathcal T}_n$ be the corresponding graded completions. As observed in \cite{Neretin}, the algebra $\overline{\mathcal T}_n$  is a Fr\'echet space. Indeed, the seminorms are defined as follows,
$$\Vert x\Vert_{i_1\ldots i_p}\,=\, \vert c_{i_1\ldots i_p}\vert\qquad\mathrm{if}\qquad x\,=\,\sum_{i_1\ldots i_p}\, c_{i_1\ldots {i_p}}     X_{i_1}X_{i_2} \ldots X_{i_p}.$$

For any $x\in \overline{\mathcal T}_n$, define the exponential operator as follows
$$\exp(x)\,=\,\sum_{k=0}^{\infty}\,\frac{x^k}{k!}.$$
Denote by $\overline{G}_n$ the image of $\overline{\mathfrak g}_n$ under the exponential operator, that is
$$\overline{G}_n\,=\,\exp\left(\overline{\mathfrak g}_n\right)\,\subset\,\overline{\mathcal T}_n.$$

It is well known (\cite{Lyons1}, Theorem 2.1.1), (\cite{Neretin}, Theorem 1.2)  and \cite{Reutenauer}  that $\overline{G}_n$  is a Fr\'echet-Lie group.
Consider the trivial left principal bundle $E\,=\,\R^n\times \overline{G}_n$ and 
$\omega \in \Omega^1(\R^n,\,\overline{\mathfrak{g}}_n)$ given by 

$$\omega=-\,\sum_{i=1}^n\,X_i\,dx^i.$$
This form induces a connection one-form   $\theta\in  \Omega^1(E,\,\overline{\mathfrak{g}}_n)$ given by

\[\theta_{(p, g)}(v, X)=g\omega_p(v)g^{-1}+Xg^{-1}\]

where $p,v\in \R^{n}$,  $g\in \overline{G}_n$ and   $X\in \overline{\mathfrak{g}}_n$.  It is straightforward to check that this form is left invariant.

\begin{remark}\label{leftaction} The action of $\overline{G}_n$ on $E$ is on the left. Chen \cite{Chen2prima} also defines an alternative version of the signature $\Theta^{*}$, satisfying    
$\Theta^{*}(\alpha\beta)=\Theta^{*}(\beta)\Theta^{*}(\alpha)$. Taking the action on the right would lead to $\Theta^{*}$ as holonomy.

\end{remark}
\medskip


\begin{proof}[Proof of Theorem \ref{main}]
We prove that the holonomy $Hol_{\,\theta,\,p}$ coincides with the Chen signature map \eqref{Chen_map} on loops.

By Theorem 2.1.2 in \cite{Lyons1}, the signature belongs to $\overline{G}_n$.  We have to show that the signature is the holonomy of the connection form $\theta$. Recall that 
$(\gamma, u)$ is a horizontal curve in $E$ iff for each $t$ we have $\theta(\gamma '(t), u'(t))=0$, i.e.
\begin{equation}\label{linear}
u'(t)= -u(t)\omega (\gamma'(t)).
\end{equation}
If $u$ is the solution satisfying $u(0)=1$ we obtain 

\begin{equation}\label{paso1}
 u(t)=1+\sum_{i_1}\int_{t_1<t}u(t_1)dx_{i_1}(\gamma'(t_1)) dt_1 \; X_{i_1} . 
\end{equation}

We write $u(t_1)$ according to \eqref{paso1}  integrating in $t_2$ which gives 

\begin{equation}
\begin{split}
 u(t)&=1+\sum_{i_1}\int_{t_1<t}dx_{i_1}(\gamma'(t_1)) dt_1 \; X_{i_1} + \\
  &+\sum_{i_2 i_1}\int_{t_2<t_1<t}u(t_2)dx_{i_2}(\gamma'(t_2))dx_{i_1}(\gamma'(t_1)) dt_2 dt_1 \; X_{i_2}X_{i_1}. 
\end{split}
\end{equation}

Repeating this procedure (see also \cite{Loll},  \cite{Neretin}) we obtain the series  

  coinciding with the signature. Because of the Chen signature Theorem \ref{Chen_Signature_Theorem}, the holonomy $Hol_{\,\theta,\,p}$ is a monomorphism and the result follows.
\end{proof}

This procedure also gives that the signature is a group element.  

\medskip

\begin{proof}[Proof of Theorem \ref{main2}]
The following definitions are taken from \cite{Neretin}. If $\xi>0$ and
\[x\,=\,\sum_{i_1\ldots i_p}\, c_{i_1\ldots {i_p}}\,     X_{i_1}X_{i_2} \ldots X_{i_p} \] 
the $\Vert x\Vert_{\xi}$ norm is defined by the expression
\[\Vert x\Vert_{\xi}\,=\, \sum_{i_1\ldots i_p}\,  {\xi} ^{p}   \vert c_{i_1\ldots {i_p}}\vert. \]

The Banach space ${\mathcal T}_n^{\xi}$ is defined as the linear subspace of those $x\in \overline{\mathcal T}_n$ verifying $\Vert x\Vert_{\xi}<\infty$ together with the norm above. Then set 
\[\mbox{ Fr}_n^{\,\xi}\,=\, \overline{ G}_n\cap {\mathcal T}_n^{\xi}.\]

Now notice that for every real $t$, we have $-\omega(\gamma'(t))\in  {\mathcal T}_n^{\xi} $. Therefore, the first part of Lemma 2.5 in \cite{Neretin} gives that the solution $u(t)$ of equation \eqref{linear} belongs to the group $\mbox{Fr}_n^{\,\xi}$ and we have the result.
\end{proof}
\medskip

\begin{proof}[Proof of Corollary \ref{Lie_group_generated}]
By Theorem \ref{main2}, the holonomy is a monomorphism and we can take the initial topology on $\mathcal{L}_p$. This gives an embedding $\mathcal{L}_p\subset \mbox{ Fr}_n^{\,\xi}$ as a topological group.

We can think about the smallest closed normal subgroup $H\unlhd \mbox{ Fr}_n^{\,\xi}$ such that $\mathcal{L}_p\subset H$. This subgroup exists by a standard Zorn lemma argument. By definition, it is \textit{simple relative to} $\mathcal{L}_p$. This proves the first and second items.

The Lie subgroup $H\unlhd \mbox{ Fr}_n^{\,\xi}$ is normal hence its Lie subalgebra $\mathfrak{h}\unlhd {\mathfrak{fr}}_n^{\,\xi}$ is actually an ideal in the Lie algebra ${\mathfrak{fr}}_n^{\,\xi} = \, \overline{\mathfrak{g}}_n\cap {\mathcal T}_n^{\xi}$ of the Lie group  $\mbox{ Fr}_n^{\,\xi}$ (see \cite{Neretin}). It is clear that $\mathfrak{h}$ is not trivial since otherwise the Lie group $H$ would be trivial as well, but this is absurd since it contains the non-trivial loop group $\mathcal{L}_p$. Since $\mathfrak{g}_n$ is a free Lie algebra and $\mathfrak{h}$ is a non-trivial ideal of ${\mathfrak{fr}}_n^{\,\xi}$, then $\mathfrak{g}_n\cap \mathfrak{h}$ is an infinitely generated free Lie algebra \cite{infinitely_generated} contained in $\mathfrak{h}$. This proves the third item and we have the result.
\end{proof}


As it was mentioned in the introduction, this raises the question of whether these initial topologies on $\mathcal{L}_p$, arising from the embeddings in $\mbox{ Fr}_n^{\,\xi}$ for each $\xi>0$, agree for all $\xi$, or at least stabilize when $\xi$ is large enough. 


\section{Geometric proof of the Chen Signature Theorem}\label{geometric_proof}

\subsection{Main lemma}

The content of the following Lemma is similar to the Fundamental Lemma 3.5 in  \cite{Chen2}. We show that it can be derived from Theorem \ref{spa} giving in the next subsection a geometric proof Theorem \ref{Chen_Signature_Theorem}.

\begin{lemma}\label{fundamental}
If $\gamma\in \Omega^{\,psi}(\mathbb R^n)$ is not trivial in ${\mathcal E}^{\,psi,ret}(\mathbb R^n)$, then there are compactly supported one forms $\omega_1,\ldots, \omega_q$ such that 
\[  \int_{\gamma}  \omega_{1}  \omega_2 \ldots  \omega_q\neq 0.\]
\end{lemma}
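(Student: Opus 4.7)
The plan is to read the lemma as a direct consequence of the holonomy theorem (Theorem \ref{spa}) together with the standard path-ordered-exponential expansion of the horizontal lift of a linear connection. In outline, I will choose a connected non-solvable group whose holonomy distinguishes $\gamma$ from the trivial loop, expand that holonomy as a series of iterated integrals, and cut off the resulting $1$-forms so that they become compactly supported.

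Fix any connected non-solvable Lie group $G$, for definiteness $G=SL(2,\R)$. Because $\gamma$ is non-trivial in ${\mathcal E}^{\,psi,ret}(\R^n)$, Theorem \ref{spa} produces a $G$-principal bundle with connection over $\R^n$ whose holonomy along $\gamma$ is non-trivial in $G$. Since $\R^n$ is contractible the bundle is trivializable, and after fixing a trivialization the connection reduces to a smooth $\mathfrak{g}$-valued $1$-form $A$ on $\R^n$. Picking a basis $\{e_1,\ldots,e_d\}$ of $\mathfrak{g}$, I write $A=\sum_{\alpha=1}^{d}\omega^{\alpha}\otimes e_{\alpha}$ with each $\omega^{\alpha}$ a smooth scalar $1$-form.

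To enforce compact support I choose a bump function $\chi\in C_c^{\infty}(\R^n)$ identically equal to $1$ on an open neighborhood of the compact set $\gamma([0,1])$, and replace each $\omega^{\alpha}$ by $\chi\omega^{\alpha}$. The modified $1$-forms are compactly supported; every iterated integral of them along $\gamma$ agrees with the corresponding iterated integral of the original $\omega^{\alpha}$ because $\gamma$ lies in the region where $\chi\equiv 1$. Equivalently, the connection $\chi A$ has the same holonomy along $\gamma$ as $A$.

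Finally, fix a faithful linear representation $\rho:G\to GL(V)$ and consider the horizontal-lift equation $u'(t)=-A(\gamma'(t))\,u(t)$ with $u(0)=e$. Composing with $\rho$ gives a linear ODE with smooth, compactly supported coefficients, and Picard iteration produces a norm-convergent series
\[
\rho\!\left(\mathrm{Hol}(\gamma)\right)\,=\,I\,+\,\sum_{q\geq 1}(-1)^{q}\!\!\sum_{i_1,\ldots,i_q}\Bigl(\int_{\gamma}\omega^{i_1}\cdots\omega^{i_q}\Bigr)\,\rho(e_{i_q})\cdots\rho(e_{i_1}).
\]
By construction $\mathrm{Hol}(\gamma)\neq e$, and faithfulness of $\rho$ forces $\rho(\mathrm{Hol}(\gamma))\neq I$; hence at least one iterated integral $\int_{\gamma}\omega^{i_1}\cdots\omega^{i_q}$ must be non-zero. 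Relabeling the corresponding forms as $\omega_1,\ldots,\omega_q$ gives the conclusion. The only step that requires any care is the Dyson-type expansion of the horizontal lift, but this is precisely the series manipulation already invoked in the proof of Theorem \ref{main}, here applied in a faithful matrix representation of $G$; extracting a non-zero coefficient from non-triviality of the holonomy is then immediate.
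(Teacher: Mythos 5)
Your argument for closed curves is essentially the paper's own: invoke Theorem \ref{spa} with $G=SL(2,\R)$ to get a connection $A$ with non-trivial holonomy along $\gamma$, expand the holonomy as a Dyson/Picard series of iterated integrals, and extract a single non-vanishing scalar iterated integral from the non-vanishing of the matrix (or Lie-algebra-coefficient) series. Your handling of compact support is a small but genuine improvement in self-containedness: the paper simply remarks that the connection constructed in the proof of Theorem 7.1 of \cite{Holonomy} already has compact support, whereas you cut off $A$ by a bump function $\chi\equiv 1$ near $\gamma([0,1])$, which manifestly leaves every iterated integral along $\gamma$ unchanged. Either route is fine.

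There is, however, one genuine gap: the lemma is stated for non-trivial classes in ${\mathcal E}^{\,psi,ret}(\mathbb R^n)$, which contains non-closed curves, and your proof only treats the case where $\gamma$ is a loop. For $\gamma(0)\neq\gamma(1)$ there is no holonomy element to speak of, so Theorem \ref{spa} does not apply as you use it. The paper disposes of this case separately and trivially: choose a compactly supported smooth function $f$ with $f(\gamma(0))\neq f(\gamma(1))$, so that $\int_\gamma df = f(\gamma(1))-f(\gamma(0))\neq 0$, giving the conclusion with $q=1$. You should add this case (or explicitly restrict your claim to closed curves and then reduce the general case to it); as written the proof does not cover the full statement.
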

\begin{proof}

Let $\gamma$ be a non trivial class in ${\mathcal E}^{\,psi,ret}(\mathbb R^n)$ and set $x=\gamma(0)$. Assume first that $\gamma$ is closed; i.e. $\gamma$ is not trivial in $\mathcal L_x(\mathbb{R}^n )$. We consider the trivial bundle  $\mathbb{R}^n \times SL(2,\, \mathbb R)$. 

Since $SL(2,\, \mathbb R)$ is simple, then Theorem \ref{spa} applies, that is, there is a 
connection $A$ in the bundle $\mathbb{R}^n \times SL(2, \mathbb R)$, such that its holonomy $H_A(\gamma)$ is not trivial. This can also be deduced from Tlas result \cite{Tlas} and Proposition \ref{Equivalencia_ret_thin} as follows: since every whisker in the piecewise smooth class is a thin loop (see Remark $4$ in \cite{Meneses}), by Proposition \ref{Equivalencia_ret_thin} it is a retraceable loop in the piecewise smoothly immersed class. In particular, $\gamma$ is not a whisker and by Tlas result we have the same conclusion as before.

The holonomy can be expressed (see \cite{Loll}, p. 1416) as
\[  H_A(\gamma)= I+ \sum_{n>0}   (-1)^n \int_{t_1<\ldots< t_n}\,  A(\gamma'(t_1))\,   \ldots\,  A(\gamma'(t_n))\,  dt_1\ldots dt_n.   \]
Since  $H_A(\gamma)$ is not the identity, there is $q>0$ such that   
\[  \int_{t_1<\ldots< t_q} A(\gamma'(t_1))\,   \ldots\,  A(\gamma'(t_q))\,dt_1\ldots dt_q\neq 0.\]
Write $A=\left(a_{ij}\right)$ where $a_{i,j}$ are real valued one-forms. Note that the $(i,j)$- entry of 
\[  \int_{t_1<\ldots< t_q} A(\gamma'(t_1))\,   \ldots\,  A(\gamma'(t_q))\,dt_1\ldots dt_q\]
is given by 
\[  \int_{t_1<\ldots< t_q} \sum_{k_1,\ldots k_{q-1}}\, a_{ik_1}(\gamma'(t_1))\,   a_{k_1k_2}(\gamma'(t_2))\,  \ldots\,  a_{k_{q-1} j}(\gamma'(t_q))\,dt_1\ldots dt_q.\]
Then, there are indexes $i,\, j$ and a $(q-1)$-tuple  $k_1,\ldots, k_{q-1}$ such that 
\[  \int_{t_1<\ldots< t_q}  a_{ik_1}(\gamma'(t_1))\,   a_{k_1k_2}(\gamma'(t_2))\,  \ldots\,  a_{k_{q-1} j}(\gamma'(t_q))\,dt_1\ldots dt_q\neq 0,\]
i.e. there are $q$ real one-forms $\omega_1, \ldots, \omega_q$ such that 
\[  \int_{t_1<\ldots< t_q}\,  \omega_{1}(\gamma'(t_1))\,   \omega_2(\gamma'(t_2))\,  \ldots\,  \omega_q(\gamma'(t_q))\,dt_1\ldots dt_q\neq 0,\]
that is 
\[  \int_{\gamma}\,  \omega_{1}\,  \omega_2 \,\ldots\,  \omega_q\,\neq\, 0.\]

In the proof of Theorem 7.1 of  \cite{Holonomy} it is assumed that the support of $A$ is compact, hence the supports of $ \omega_{1},  \omega_2, \ldots  \omega_q$ are compact as well.
If $\gamma$ is not closed, i.e. $\gamma(0)\neq \gamma(1)$,  we consider a compactly supported real smooth function  $f$ such that $f(\gamma(0))\neq f(\gamma(1))$ and therefore obtain
$\int_{\gamma}df\neq 0$. 
\end{proof}

\subsection{Proof of Theorem \ref{Chen_Signature_Theorem}}

  The following definition and lemma,  which we include for the sake of completeness,  are taken  from \cite{Chen2}.

\begin{definition}
\[\int_a^b f_1(t)dt\ldots f_p(t)dt =\int_a^{b} \left(\int_a^{s}  f_1(t)dt\ldots f_{p-1}(t)dt\right) f_p(s) ds.  \]
\end{definition}

\begin{lemma}\label{real}
The product $\int_a^b\, f_1(t)dt\ldots f_i(t)dt\,  \int_a^b\, f_{i+1}(t)dt\ldots f_p(t)dt$ is a linear combination of integrals 
$\int_a^b\, f_{i_1}(t)dt\ldots f_{i_p}(t)dt$.
\end{lemma}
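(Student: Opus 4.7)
The plan is to prove Lemma \ref{real} by induction on the total length $p$, using the recursive definition of iterated integrals together with the fundamental theorem of calculus. This is the shuffle product identity for iterated integrals, and while one could prove it by a combinatorial decomposition of the product of two simplices into shuffled simplices, the cleanest route for just obtaining a linear combination (without identifying the precise coefficients) is inductive.

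The base case is when $i=0$ or $i=p$, in which case one of the two factors is an empty product (by convention equal to $1$), and the identity is immediate. For the inductive step, fix $1\le i\le p-1$ and regard both factors as functions of the upper endpoint. Set
\[
F(b)=\int_a^b f_1(t)\,dt\ldots f_i(t)\,dt,\qquad G(b)=\int_a^b f_{i+1}(t)\,dt\ldots f_p(t)\,dt.
\]
By the recursive definition one has $F'(b)=F_{i-1}(b)\,f_i(b)$ and $G'(b)=G_{p-i-1}(b)\,f_p(b)$, where $F_{i-1}$ and $G_{p-i-1}$ denote the analogous iterated integrals of lengths $i-1$ and $p-i-1$ respectively.

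Differentiating the product and applying the product rule yields
\[
\frac{d}{db}\bigl(F(b)G(b)\bigr)\,=\,\bigl(F_{i-1}(b)G(b)\bigr)f_i(b)\,+\,\bigl(F(b)G_{p-i-1}(b)\bigr)f_p(b).
\]
The two bracketed quantities are products of iterated integrals whose lengths sum to $p-1$, so by the inductive hypothesis each is a linear combination of iterated integrals of length $p-1$, say $\sum_\alpha c_\alpha \int_a^b \omega^{(\alpha)}_1(t)\,dt\ldots \omega^{(\alpha)}_{p-1}(t)\,dt$. Integrating the resulting identity from $a$ to $b$ and invoking once more the recursive definition (now appending $f_i$ or $f_p$ as the last entry) expresses $F(b)G(b)$ as a linear combination of iterated integrals of length exactly $p$, completing the induction.

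The only delicate point is making sure the recursion is applied consistently: the induction must be carried out on the total length $p$ of the product, not on either factor separately, so that the hypothesis applies to the two shorter products appearing after differentiation. I do not anticipate a serious obstacle here, since the definition is tailored to this kind of argument and the statement only requires existence of a linear expansion; the combinatorial identification of the coefficients with shuffles is not needed for the use made of the lemma in the subsequent proof of Theorem \ref{Chen_Signature_Theorem}.
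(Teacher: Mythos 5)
Your proposal is correct and is essentially identical to the paper's own proof: your $F_{i-1}(b)G(b)$ and $F(b)G_{p-i-1}(b)$ are exactly the paper's auxiliary functions $h_1$ and $h_2$, and the identity obtained by differentiating the product and integrating back is the paper's $g(b)=\int_a^b h_1(t)f_i(t)\,dt+\int_a^b h_2(t)f_p(t)\,dt$, followed by the same induction. Your remark that the induction must run on the total length $p$ rather than on either factor separately is a point the paper leaves implicit, but there is no substantive difference.
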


\begin{proof}
The cases $p=1$, $p=i$, $i=0$ are trivial so assume that $p\geq 2$ and $0<i<p$. Set 
\[g(s)=\int_a^s f_1(t)dt\ldots f_i(t)dt  \int_a^s f_{i+1}(t)dt\ldots f_p(t)dt,\]

\[h_1(s)=\int_a^s f_1(t)dt\ldots f_{i-1}(t)dt  \int_a^s f_{i+1}(t)dt\ldots f_p(t)dt,\]

\[h_2(s)=\int_a^s f_1(t)dt\ldots f_i(t)dt  \int_a^s f_{i+1}(t)dt\ldots f_{p-1}(t)dt.\]

Observe that
\[g(b)=\int_a^b g'(t)dt= \int_a^b h_1(t) f_i(t)dt  +   \int_a^b h_2(t) f_p(t)dt. \]
The Lemma is finished applying the induction hypothesis to $h_1$ and $h_2$.
\end{proof}

We say that integrals of the form   $\int_{\gamma}dx_{i_1}\ldots dx_{i_k}$ are elementary integrals.  From Lemma \ref{real} we obtain 
\begin{corollary}\label{elementary}
Any product of elementary integrals is a linear combination of elementary integrals.
\end{corollary}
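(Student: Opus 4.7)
The plan is to recognize that each elementary integral is, by definition, a real iterated integral in the sense of the preceding definition, and then to invoke Lemma \ref{real} directly. Concretely, given the path $\gamma:[a,b]\to\mathbb R^n$, write $\gamma'_j(t)=dx_j(\gamma'(t))$ for the coordinate components of the velocity. Then for any index sequence $i_1,\ldots,i_k$ one has
\[
\int_{\gamma} dx_{i_1}\ldots dx_{i_k}\,=\,\int_a^b \gamma'_{i_1}(t)\,dt\,\ldots\,\gamma'_{i_k}(t)\,dt,
\]
so every elementary integral is of the form handled by Lemma \ref{real} with $f_r=\gamma'_{i_r}$.

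Next I would treat the case of a product of two elementary integrals $\int_{\gamma}dx_{i_1}\ldots dx_{i_k}\cdot\int_{\gamma}dx_{j_1}\ldots dx_{j_\ell}$. Setting $f_1=\gamma'_{i_1},\ldots,f_k=\gamma'_{i_k},f_{k+1}=\gamma'_{j_1},\ldots,f_{k+\ell}=\gamma'_{j_\ell}$, Lemma \ref{real} (with $i=k$ and $p=k+\ell$) expresses this product as a linear combination of iterated integrals $\int_a^b f_{\sigma(1)}(t)\,dt\ldots f_{\sigma(k+\ell)}(t)\,dt$ for certain rearrangements $\sigma$. Each such summand is, by the identification above, an elementary integral $\int_{\gamma}dx_{m_1}\ldots dx_{m_{k+\ell}}$ where $(m_1,\ldots,m_{k+\ell})$ is the corresponding rearrangement of $(i_1,\ldots,i_k,j_1,\ldots,j_\ell)$. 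This settles the two-factor case.

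The general case follows by induction on the number of factors. For a product of $m\ge 3$ elementary integrals, apply the two-factor case to the first two factors to obtain a linear combination of elementary integrals, then multiply by the remaining $m-2$ factors and invoke the inductive hypothesis (using bilinearity of the product) to conclude.

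I do not expect any real obstacle: the whole content is a bookkeeping exercise matching the notation of Lemma \ref{real} with the definition of elementary integrals. The only point requiring minor care is keeping track of how the multi-index of the resulting elementary integral is obtained as a shuffle-type rearrangement of the original two multi-indices, but this is automatic from the statement of Lemma \ref{real} and does not affect the linear-combination conclusion.
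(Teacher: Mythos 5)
Your proof is correct and follows exactly the route the paper intends: the paper derives the corollary directly from Lemma \ref{real}, and your write-up simply makes explicit the identification of elementary integrals with the iterated integrals $\int_a^b f_1(t)\,dt\ldots f_p(t)\,dt$ (via $f_r=\gamma'_{i_r}$) together with the easy induction on the number of factors. No issues.
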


Now, the kernel of the signature can be obtained as in \cite{Chen2}. 

Since the supports of $\omega_1\ldots\omega_q$ are compact, there are  $\overline\omega_1,\ldots,\overline\omega_q$ with polynomial coefficients such that 
\[  \int_{\gamma}\,  \overline\omega_{1}\,  \overline\omega_2\, \ldots \, \overline\omega_q\,\neq\, 0.\]
By additivity, we may assume that $\overline\omega_i$ have monomial coefficients, namely, there are monomials  $g_1,\ldots, g_q$  such that 
\[  \int_{\gamma}\,  g_{1}dx_{j_1}\,  g_2 dx_{j_2}\, \ldots\,  g_q dx_{j_q}\,\neq\, 0.\]
We have to show that this expression is a linear combination of elementary iterated integrals, i.e. integrals of the form
\[  \int_{\gamma}\,  dx_{i_1}  dx_{i_2} \ldots  dx_{i_p}, \]
and then one of those integrals has to be non zero, finishing the theorem. 

The claim follows by induction in $q$ as in Lemma 4.1 of \cite{Chen2}. Assume that $q=1$. Since $g_1$ is a monomial we have $g_1=x_1^{m_1}\ldots x_n^{m_n}$. Let $\gamma_t$ be the restriction of $\gamma$ to $[0, t]$. Hence we have 
\[x_i(\gamma(t))=\int_{\gamma_t}dx_i+ x_i(\gamma(0)),  \]
and thus  
\[  \int_{\gamma}  g_1 dx_{j_1}=\int^1_0   \left(\int_{\gamma_t}dx_1+ x_1(\gamma(0))\right)^{m_1}\ldots   \left(\int_{\gamma_t}dx_n+ x_n(\gamma(0))\right)^{m_n}\, dx_{j_1}(\gamma'(t)) dt \]
which gives, by Corollary \ref{elementary}, a linear combination of integrals 
\[ \int_0^{1}   \left(\int_{\gamma_t}dx_{i_1}dx_{i_2} \ldots dx_{i_k}\right)\,  dx_{j_1}(\gamma'(t)) dt= \int_{\gamma}dx_{i_1}dx_{i_2} \ldots dx_{i_k} dx_{j_1}.   \]

For the induction step, note that
\[  \int_{\gamma}  g_{1}dx_{j_1}  g_2 dx_{j_2} \ldots  g_q dx_{j_q} =\int_0^1 \left( \int_{\gamma_t}  g_{1}dx_{j_1}  g_2 dx_{j_2} \ldots  g_{q-1} dx_{j_{q-1}} \right) g_q(\gamma(t))dx_{j_q}(\gamma'(t))dt.\]
By the induction hypothesis,
\[\int_{\gamma_t}  g_{1}dx_{j_1}  g_2 dx_{j_2} \ldots  g_{q-1} dx_{j_{q-1}}\]
is a linear combination of integrals of the form
\[ \int_{\gamma_t}dx_{i_1}dx_{i_2} \ldots dx_{i_k}.  \]
Applying the same argument as in the base step we have that 
\[  \int_{\gamma}  g_{1}dx_{j_1}  g_2 dx_{j_2} \ldots  g_q dx_{j_q}\]
is a linear combination of integrals of the form
\[ \int_0^1 \left( \int_{\gamma_t}dx_{i_1}dx_{i_2} \ldots dx_{i_k} \int_{\gamma_t} dx_{i_{k+1}}dx_{i_{k+2}} \ldots dx_{i_{k+r}}\right)dx_{j_q}(\gamma'(t))dt. \]
which is a linear combination of elementary integrals by Corollary \ref{elementary}.

\section*{Acknowledgments}
We would like to thank Pablo Lessa for putting our attention on the Chen signature map and to Mauricio Velasco for his suggestions for improving the manuscript.

\end{document}